\newtheorem{thm}{Theorem}[section]
\theoremstyle{remark}
\begin{document}

\title[Characterisation of Fourier transform]
{A Characterisation of the Euclidean Fourier transform on the
Schwartz space}
\author{R. Lakshmi Lavanya}

\address{Department of Mathematics\\ Indian Institute
of Science Education and Research\\Tirupati-517 507}
\email{rlakshmilavanya@iisertirupati.ac.in}

\date{\today}
\thanks{The author is thankful to the anonymous referee of the paper\cite{LT} for suggesting the problem. She thanks Prof. E.K. Narayanan
for critically reading the manuscript, and for his invaluable
suggestions on the earlier versions.}

 \keywords{Fourier transform, Schwartz class
functions} \subjclass[2010]{42A38}

\begin{abstract}
We obtain a characterisation of the Fourier transform on the space
of Schwartz class functions on $\mathbb{R}^n.$ The result states
that any appropriately additive bijection of the Schwartz space
onto itself, which interchanges convolution and pointwise products
is essentially the Fourier transform.
\end{abstract}

\maketitle

\section{Introduction}
\setcounter{equation}{0}
 The Fourier transform on various locally
compact groups, and its properties with respect to different
operations on function spaces on these groups are well understood.
The interaction of the Fourier transform with the translations on
the groups, and with certain products on the functions defined on
these groups have been used to obtain characterisations of the
Fourier transform. For more details, refer to \cite{AAM1}-\cite{LT}, and the references therein.\\

We denote by $\mathcal{S}(\mathbb{R}^n),$ the Schwartz class of
rapidly decreasing functions on $\mathbb{R}^n,$ defined as
follows:\\

For a function $f: \mathbb{R}^n \rightarrow \mathbb{C},$ let
$$\|f\|_{\alpha,\beta} : = \sup \limits_{x\in \mathbb{R}^n} \ |x^\alpha \partial^\beta f(x)|,$$
where for multi-indices $\alpha=(\alpha_1,\cdots , \alpha_n),$ and
 $\beta = (\beta_1,\cdots , \beta_n)\in ~\mathbb{N}^n,$ we denote
$x^\alpha = \prod\limits_{j=1}^n x_j^{\alpha_j},$ and $
\partial^\beta = \frac{\partial^{\beta_1}}{\partial
x_1^{\beta_1}}\frac{\partial^{\beta_2}}{\partial x_2^{\beta_2}}
\cdots \frac{\partial^{\beta_n}}{\partial x_n^{\beta_n}}.$ \\

The \textit{Schwartz class} of functions, denoted
$\mathcal{S}(\mathbb{R}^n),$ or simply $\mathcal{S},$ is defined
to be
$$\mathcal{S}(\mathbb{R}^n):=\{f:\mathbb{R}^n \rightarrow \mathbb{C} : f\in \mathcal{C}^\infty(\mathbb{R}^n),
\|f\|_{\alpha,\beta} <\infty \textrm{ \  for \ all \ } \alpha,
\beta \in \mathbb{N}^n\}.$$ Then the space
$\mathcal{C}_c^\infty(\mathbb{R}^n),$ also denoted
$\mathcal{C}_c^\infty,$ of compactly supported smooth functions
defined on $\mathbb{R}^n,$ is a
subspace of $\mathcal{S}(\mathbb{R}^n).$\\

\noindent The topology generated by the family of seminorms
$\{\|\cdot\|_{\alpha,\beta}: \alpha,\beta\in \mathbb{N}^n\}$ makes
$\mathcal{S}(\mathbb{R}^n)$ into a Fr\'{e}chet space over the
complex numbers. Also, $\mathcal{S}(\mathbb{R}^n)$ is closed under
the operations of pointwise and convolution product, where the
convolution of functions in $\mathcal{S}(\mathbb{R}^n)$ is defined
as
$$(f\ast g)(x) = \int\limits_{\mathbb{R}^n} f(x-y) \ g(y) \ dy, \ x\in \mathbb{R}^n.$$
For a function $f\in \mathcal{S}(\mathbb{R}^n),$ its Fourier
transform $\mathcal{F}f$ is defined as
$$\mathcal{F}f(\xi) = (2\pi)^{-n/2} \int\limits_{\mathbb{R}^n} f(x) \ e^{-ix\cdot \xi} \ dx, \ \xi\in \mathbb{R}^n.$$

The space of all continuous linear functionals on
$\mathcal{S}(\mathbb{R}^n)$ is called the \textit{space of
tempered distributions}, and is denoted by
$\mathcal{S}'(\mathbb{R}^n).$ We denote the action of $\varphi\in
\mathcal{S}'(\mathbb{R}^n)$ on a
function $f\in \mathcal{S}(\mathbb{R}^n)$ as $\langle \varphi,f \rangle .$\\

The operations of pointwise multiplication and convolution of
functions in $\mathcal{S}(\mathbb{R}^n)$ can be appropriately
extended to $\mathcal{S}'(\mathbb{R}^n)$ as follows:

\noindent For $f,g \in \mathcal{S}(\mathbb{R}^n)$ and $\varphi \in
\mathcal{S}'(\mathbb{R}^n)$,
\begin{eqnarray*}
  \langle f\cdot \varphi,g\rangle &=& \langle \varphi,f\cdot g\rangle  \\
  \langle f\ast \varphi,g\rangle &=& \langle \varphi, \tilde{f} \ast g\rangle ,
   \end{eqnarray*}
   where $\tilde{f}(x)=f(-x)$ for $x \in \mathbb{R}^n.$
   Then for $f,g\in \mathcal{S}(\mathbb{R}^n)$ and $\varphi \in
   \mathcal{S}'(\mathbb{R}^n),$ we have $f\cdot \varphi \in \mathcal{S}'(\mathbb{R}^n)$ and $f\ast
   \varphi \in \mathcal{S}'(\mathbb{R}^n).$\\

   \noindent The Fourier transform, initially defined on
   $\mathcal{S}(\mathbb{R}^n),$ can be extended to the space
   $\mathcal{S}'(\mathbb{R}^n)$
   via $$\langle \mathcal{F}\varphi,f \rangle = \langle \varphi, \mathcal{F}f \rangle , \ \textrm{for} \ f\in \mathcal{S}(\mathbb{R}^n), \ \varphi \in \mathcal{S}'(\mathbb{R}^n).$$
   The Fourier transform is a topological isomorphism of
   $\mathcal{S}'(\mathbb{R}^n)$ onto itself and satisfies
   \begin{eqnarray*}
      \mathcal{F} (f\cdot \varphi)&=& \mathcal{F}(f)\ast \mathcal{F}(\varphi) \\
      \mathcal{F}(f\ast \varphi)&=& \mathcal{F}(f) \cdot
      \mathcal{F}(\varphi).
   \end{eqnarray*}

In \cite{AAM2} S. Alesker, S. Artstein-Avidan and V. Milman gave a
very interesting characterisation of the Fourier transform on the
Schwartz class of functions on $\mathbb{R}^n.$ The precise
statement of their result is as follows:
\begin{thm}
Assume that $T:\mathcal{S}(\mathbb{R}^n) \rightarrow
\mathcal{S}(\mathbb{R}^n)$ is a bijection which admits a bijective
extension $T': \mathcal{S}'(\mathbb{R}^n) \rightarrow
\mathcal{S}'(\mathbb{R}^n)$ such that for all $f\in
\mathcal{S}(\mathbb{R}^n)$ and $\varphi \in
\mathcal{S}'(\mathbb{R}^n),$ we have
$$T(f\ast \varphi) = T(f) T(\varphi) \textrm{ \ and \ } T(f\cdot \varphi) = T(f) \ast T(\varphi).$$
Then $T$ is essentially the Fourier transform: that is, for some
matrix $B\in GL(n,\mathbb{R})$ with $|det \ B|=1,$ we have either
$Tf= \mathcal{F}(f\circ B)$ or $Tf=\mathcal{F}(\overline{f\circ
B})$ for all functions $f\in \mathcal{S}(\mathbb{R}^n).$
\end{thm}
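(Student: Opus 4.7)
My plan is to use the two intertwining identities to determine the action of $T$ on the two dual families $\{\delta_a\}_{a\in\mathbb{R}^n}$ and $\{e_a\}_{a\in\mathbb{R}^n}$, where $e_a(x)=e^{ia\cdot x}$, and then to recover $T$ on all of $\mathcal{S}(\mathbb{R}^n)$ by composing with $\mathcal{F}^{-1}$ and observing that a bijection of $\mathcal{S}(\mathbb{R}^n)$ that commutes with both translations and modulations must essentially be a linear change of variables. The first observations are that $f\ast\delta=f$ together with the surjectivity of $T$ forces $T(\delta)=1$, while $f\cdot 1=f$ forces $T(1)=\delta$; then $\delta_a\ast\delta_b=\delta_{a+b}$ makes $\chi_a:=T(\delta_a)$ multiplicative in $a$, and $e_a\cdot e_b=e_{a+b}$ makes $\psi_a:=T(e_a)$ convolutive in $a$.

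The next step is to argue that each $\chi_a$ must be a tempered multiplier of $\mathcal{S}(\mathbb{R}^n)$, so a smooth function of polynomial growth; since $\chi_a\chi_{-a}=1$ it has no zeros, so $\chi_a(x)=e^{\phi(a,x)}$ with $\phi$ additive in $a$. The polynomial-growth restriction applied to both $\chi_a$ and $\chi_{-a}$ forces the real part of $\phi$ to be $x$-independent (and, up to a harmless absorption, zero), and additivity in $a$ combined with the continuity inherited from $T'$ upgrades $\phi$ to a real-bilinear form, giving $\chi_a(x)=e^{i\langle Ma,x\rangle}$ for some $M\in GL(n,\mathbb{R})$. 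A parallel analysis in the convolution algebra pins $\psi_a$ down as a translated point mass $\delta_{Na}$ for some $N\in GL(n,\mathbb{R})$.

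Substituting these back into the two identities yields $T(\tau_a f)=\chi_a\cdot T(f)$ and $T(e_a\cdot f)=\tau_{Na}T(f)$, which is precisely the intertwining pattern of the Fourier transform. Define $S:=\mathcal{F}^{-1}\circ T$; then $S$ preserves both algebra structures and commutes with all translations and modulations. A bijection of $\mathcal{S}(\mathbb{R}^n)$ with these properties must be composition with an invertible linear map $B$, possibly followed by complex conjugation — the latter alternative being unavoidable because both $f\mapsto f\circ B$ and $f\mapsto\overline{f\circ B}$ preserve pointwise and convolution products when $|\det B|=1$, and conjugation commutes with the Heisenberg action. This gives the stated form $Tf=\mathcal{F}(f\circ B)$ or $Tf=\mathcal{F}(\overline{f\circ B})$, with the constraint $|\det B|=1$ coming from Plancherel applied to the induced isometry.

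The main obstacle is that linearity and continuity of $T$ are nowhere assumed; they must be extracted from the two algebraic identities together with the bijective extension to $\mathcal{S}'(\mathbb{R}^n)$. Concretely, the delicate steps are (i) upgrading the pointwise additivity of $a\mapsto\phi(a,x)$ to joint real-bilinearity with values in $i\mathbb{R}$, and (ii) promoting the action of $T$ from the Heisenberg orbit of a Gaussian to all of $\mathcal{S}(\mathbb{R}^n)$. The distributional extension is crucial throughout, as it is what permits plugging $\delta_a$, $e_a$, and the constant $1$ into the convolution/multiplication identities and drawing consequences from them.
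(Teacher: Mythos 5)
This statement is quoted in the paper from \cite{AAM2} and is not proved there; the closest internal comparison is the paper's proof of Theorem \ref{LaU}, which handles the variant where the distributional hypothesis is replaced by an additivity assumption. Your route is genuinely different from that one: you propose to exploit the extension $T'$ to compute $T$ on the units and on the dual families $\delta_a$ and $e^{ia\cdot x}$, and then to read off the translation/modulation intertwining. The opening moves are sound: $T'(\delta)=1$ and $T'(1)=\delta$ do follow from surjectivity, and the multiplicativity $\chi_{a+b}=\chi_a\chi_b$ can be obtained legitimately (note that the hypothesis only covers $T(f\ast\varphi)$ for $f\in\mathcal{S}(\mathbb{R}^n)$, so you cannot plug $\delta_a\ast\delta_b$ in directly; you must route through $T(\tau_{a}\tau_{b}f)$, which works). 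The identification of $\chi_a$ as a multiplier of $\mathcal{S}(\mathbb{R}^n)$, hence an element of $\mathcal{O}_M$, is also fine.

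The genuine gap is at the regularity step. You write that ``additivity in $a$ combined with the continuity inherited from $T'$ upgrades $\phi$ to a real-bilinear form.'' There is no continuity anywhere in the hypotheses: $T$ and $T'$ are only assumed to be bijections satisfying algebraic identities, and the entire difficulty of the theorem is that continuity and linearity must be \emph{derived}. Without some regularity in $a$, the character $a\mapsto\chi_a(x)$ is an arbitrary homomorphism of $(\mathbb{R}^n,+)$ into $(\mathbb{C}^*,\cdot)$, and Hamel-basis pathologies are not excluded; likewise the claim $|\chi_a|\equiv 1$ does not follow from polynomial boundedness of $\chi_{na}=\chi_a^{\,n}$, since the polynomial bound may depend on $n$. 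Both the paper (Steps 1--6 and 10 of Theorem \ref{LaU}) and \cite{AAM2} obtain the missing regularity from structural facts: support preservation forces a point map $A$ which is shown to be a homeomorphism and additive, hence linear, and the induced scalar action is a field automorphism of $\mathbb{C}$ fixing $\mathbb{R}$, which is rigid (identity or conjugation) for purely algebraic reasons. Your final step has the same problem in a different guise: the classification of bijections of $\mathcal{S}(\mathbb{R}^n)$ commuting with translations and modulations is only standard for \emph{linear, continuous} maps (Stone--von Neumann), neither of which you have at that point; and $|\det B|=1$ must come from consistency of the convolution identity, not from a Plancherel isometry, which is also not a hypothesis. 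To complete your approach you would need to replace the appeal to continuity by an argument of the support-preservation type, at which point you are essentially reconstructing the paper's Steps 1--11.
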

As the authors of the above result had remarked, the hypotheses of
this result involves only \textit{algebraic} properties of the map
on the class of tempered distributions, whereas the conclusion
states
that the map is essentially the Fourier transform.\\

Motivated by the above result, a characterisation of the Fourier
transform on the Schwartz space of the Heisenberg group was
obtained in \cite{LT}. This result did not involve any hypothesis
in terms of the tempered distributions. The anonymous referee of
\cite{LT} suggested if a characterisation of the Fourier transform
on $\mathbb{R}^n,$ without any assumptions on the tempered
distributions, could be obtained. This paper is an attempt towards
a positive answer to this question.\\

For a function $f:\mathbb{R}^n\rightarrow \mathbb{C},$ the support
of $f,$ denoted $Supp \ f,$ is defined as
$$Supp \ f := Closure({\{x\in \mathbb{R}^n : f(x) \neq 0\}}).$$

\section{A Characterisation of Fourier transform on $\mathcal{S}(\mathbb{R}^n)$}

We remark that our results are very much influenced by the those
of Alesker et al.\cite{AAM2} and their interesting proofs.\\

\noindent Our main result is the following:

\begin{thm}\label{LaT} Let $T:\mathcal{S}(\mathbb{R}^n) \rightarrow \mathcal{S}(\mathbb{R}^n)$ be a bijection satisfying the following conditions for all functions
$f,g \in \mathcal{S}(\mathbb{R}^n):$
\begin{description}
\item[(a)]  $T(f+\overline{g}) = T(f)+[T(g)]^*,$ where $[Tg]^*(x)
= \overline{Tg(-x)}, \ x\in ~\mathbb{R}^n.$
 \item[(b)] $T(f\cdot g)=
T(f) \ast T(g )$, \item[(c)] $T(f\ast g)= T(g)\cdot T(g).$
\end{description} Then there exists a matrix $B\in GL(n,\mathbb{R}),$ with $|det \ B|=1$ such that
either $Tf = \mathcal{F}(f\circ B)$ for all $f\in
\mathcal{S}(\mathbb{R}^n),$ or $Tf = \mathcal{F}(\overline{f\circ
B})$ for all $f\in \mathcal{S}(\mathbb{R}^n).$
\end{thm}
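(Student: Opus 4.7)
\emph{Reductions and simplification.} Taking $f=g=0$ in (a) yields $[T(0)]^{*}=0$, so $T(0)=0$; setting $f=0$ gives $T(\overline g)=[T(g)]^{*}$. Substituting this back turns (a) into plain additivity $T(f+h)=T(f)+T(h)$ for all $f,h\in\mathcal{S}(\mathbb{R}^n)$. Now set $U:=\mathcal{F}^{-1}\circ T$; using the Fourier identities recalled before the theorem together with (b) and (c), a direct calculation gives
\[
U(f\cdot g)=\mathcal{F}^{-1}(Tf\ast Tg)=Uf\cdot Ug,\qquad U(f\ast g)=\mathcal{F}^{-1}(Tf\cdot Tg)=Uf\ast Ug.
\]
Combined with additivity, $U$ is an additive bijection of $\mathcal{S}(\mathbb{R}^n)$ preserving \emph{both} the pointwise and the convolution product. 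It therefore suffices to show that any such $U$ has the form $Uf=f\circ B$ or $Uf=\overline{f\circ B}$ for some $B\in GL(n,\mathbb{R})$ with $|\det B|=1$, since $T=\mathcal{F}\circ U$ then has the claimed form.

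\emph{Extracting a point map.} For each $x\in\mathbb{R}^n$, the functional $f\mapsto Uf(x)$ is additive and multiplicative from $\mathcal{S}$ to $\mathbb{C}$. A maximal-ideal argument in the spirit of the Gelfand-type identification used by Alesker et al.\ \cite{AAM2}, combined with the surjectivity of $U$, produces a bijection $\sigma:\mathbb{R}^n\to\mathbb{R}^n$ together with additive multiplicative maps $\chi_x:\mathbb{C}\to\mathbb{C}$ with $\chi_x(1)=1$, such that $Uf(x)=\chi_x\bigl(f(\sigma(x))\bigr)$. Testing $U$ against translates of a fixed Schwartz bump and using that $Uf\in\mathcal{S}$ upgrades $\sigma$ to a smooth bijection of $\mathbb{R}^n$ and forces the ring endomorphism $\chi_x$ to depend continuously on its argument; a continuous $\mathbb{Q}$-algebra endomorphism of $\mathbb{C}$ is either the identity or complex conjugation, and the choice is uniform in $x$ by connectedness of $\mathbb{R}^n$.

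\emph{Linearity of $\sigma$ and main obstacle.} After absorbing the global conjugation, write $Uf=f\circ\sigma$. The convolution identity $U(f\ast g)=Uf\ast Ug$ becomes a functional equation for $\sigma$; changing variables $z=\sigma(y)$ on one side and letting $g$ approximate a Dirac mass yields simultaneously
\[
\sigma(x)-z=\sigma\bigl(x-\sigma^{-1}(z)\bigr)\text{ for all }x,z\in\mathbb{R}^n,\qquad |\det D\sigma|\equiv 1.
\]
The first is Cauchy's functional equation on $\mathbb{R}^n$, which combined with smoothness of $\sigma$ forces $\sigma$ to be linear; the second gives $|\det\sigma|=1$. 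Setting $B=\sigma$ completes the argument. The main difficulty I foresee is the middle stage: $U$ is \emph{a priori} only additive, not $\mathbb{R}$-linear or continuous, so extracting a smooth point map $\sigma$ and ruling out pathological $\mathbb{Q}$-algebra endomorphisms of $\mathbb{C}$ requires careful exploitation of the constraint $U(\mathcal{S})=\mathcal{S}$. Only once this regularity is secured can the convolution side be used, via the Cauchy-equation argument above, to rigidify $\sigma$ into a linear map with unit Jacobian.
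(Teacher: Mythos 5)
Your overall strategy coincides with the paper's: conjugate by $\mathcal{F}$ to reduce to a bijection $U=\mathcal{F}^{-1}\circ T$ of $\mathcal{S}(\mathbb{R}^n)$ preserving both the pointwise and convolution products, extract a point map from pointwise multiplicativity, and then use the convolution identity to force linearity and unit Jacobian. Your opening reduction (deriving $T(0)=0$, $T(\overline g)=[Tg]^*$ and plain additivity from (a), then passing to $U$) is correct. But the middle of your argument has genuine gaps. First, the ``maximal-ideal argument'' producing $\sigma$ and $\chi_x$ is asserted, not carried out, and it is the technical heart of the proof: the functional $f\mapsto Uf(x)$ is only known to be additive and multiplicative, not $\mathbb{C}$-linear or continuous, so no off-the-shelf Gelfand theory applies. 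The paper builds the point map by hand: from $U(fg)=Uf\cdot Ug$ it shows that $Ug=1$ on $Supp\,Uf$ whenever $g=1$ on $Supp\, f$, deduces that $U$ preserves $\mathcal{C}_c^\infty$, and then runs a finite-intersection-property/compactness argument over the closed sets $K\cap Supp\,Uf$ (for $f(x_0)\neq 0$) to produce $y_0=Ax_0$; only then does it verify that $A$ is a well-defined homeomorphism with $A(Supp\,f)=Supp\,Uf$.

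Second, your route to linearity of $\sigma$ is circular: you substitute $z=\sigma(y)$ and write a Jacobian $|\det D\sigma|$, and you claim $\sigma$ is smooth, but no argument is given for differentiability and none is available at that stage --- the support analysis only yields that $\sigma$ is a homeomorphism. The paper avoids this entirely: additivity of the point map is proved by a purely topological argument, comparing $Supp\,U(f\ast g)=A(Supp(f\ast g))\subseteq A(W_x+W_y)$ with $Supp(Uf\ast Ug)\subseteq A(W_x)+A(W_y)$ for suitably localized $f,g$, which forces $A(x+y)=Ax+Ay$; a continuous additive bijection of $\mathbb{R}^n$ is then $\mathbb{R}$-linear, and only after that is the change of variables with constant Jacobian legitimate (yielding $|\det B|=1$ exactly as in your last display). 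Third, your exclusion of wild automorphisms of $\mathbb{C}$ rests on continuity of $\chi_x$, which you do not establish and which does not follow from continuity of $Uf$ in $x$ (the map $c\mapsto U(cg)$ is not known to be continuous). The conjugation identity you derived and then set aside is precisely what is needed: $U(\overline g)=\overline{Ug}$ gives $\chi_x(\overline c)=\overline{\chi_x(c)}$, so $\chi_x$ preserves $\mathbb{R}$; a field endomorphism of $\mathbb{R}$ fixes $\mathbb{Q}$ and preserves squares, hence order, hence is the identity, and then $\chi_x(i)=\pm i$ gives identity or conjugation with no continuity assumption. This is how the paper's Step 10 proceeds, and you should replace the continuity appeal by this algebraic argument.
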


\begin{proof}
For $f\in \mathcal{S}(\mathbb{R}^n),$ we have $Tf \in
\mathcal{S}(\mathbb{R}^n).$ Since the Fourier transform
$\mathcal{F}$ is a bijection on $\mathcal{S}(\mathbb{R}^n),$ there
exists unique $g\in \mathcal{S}(\mathbb{R}^n)$ with
$Tf=\mathcal{F}g.$ Define a map $U:
\mathcal{S}(\mathbb{R}^n)\rightarrow \mathcal{S}(\mathbb{R}^n)$ as
$Uf:=g$ if $Tf=\mathcal{F}g.$ Then $Tf = \mathcal{F}(Uf)$ for all
$f\in \mathcal{S}.$ The map $U$ is a bijection of $\mathcal{S}$
onto itself and satisfies the following conditions for all
functions $f,g\in\mathcal{S}(\mathbb{R}^n):$
\begin{enumerate}
\item  $U(f+\overline{g}) = U(f)+\overline{U(g)},$ \item $U(f\cdot
g)= U(f) \cdot U(g )$, \item  $U(f\ast g)= U(f)\ast U(g).$
\end{enumerate}
The theorem is a then a consequence of the following result, which gives a precise description of the map $U.$\\

\end{proof}

\begin{thm}\label{LaU} Let $U:\mathcal{S}(\mathbb{R}^n) \rightarrow \mathcal{S}(\mathbb{R}^n)$ be a bijection satisfying the following conditions for all functions
$f,g \in \mathcal{S}(\mathbb{R}^n):$
\begin{enumerate} \item $U(f+\overline{g}) = U(f)+\overline{U(g)},$ \item $U(f\cdot g)=
U(f) \cdot U(g )$, \item $U(f\ast g)= U(f)\ast U(g).$
\end{enumerate} Then there exists a matrix $B\in GL(n,\mathbb{R})$ with $|det \ B| =1$ such that
either $Uf(x) = f(Bx)$ for all $f\in \mathcal{S}(\mathbb{R}^n),$
or $Uf(x) = \overline{f(Bx)}$ for all $f\in
\mathcal{S}(\mathbb{R}^n).$
\end{thm}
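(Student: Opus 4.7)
First, I would extract from condition (a) the linear-algebraic structure of $U$. Setting $f=g=0$ forces $U(0)=0$; then taking $f=0$ gives $U(\overline g)=\overline{U(g)}$, and (a) collapses to additivity of $U$. In particular $U$ maps real-valued Schwartz functions to real-valued ones, and $U(|h|^2)=|U(h)|^2\ge 0$. For each $y\in\mathbb{R}^n$, the pointwise functional
\[
\phi_y\colon \mathcal{S}(\mathbb{R}^n)\to\mathbb{C},\qquad \phi_y(f):=U(f)(y),
\]
is then an additive, pointwise-multiplicative, conjugation-equivariant ring homomorphism. Bijectivity of $U$ gives surjectivity of $\phi_y$: for any $c\in\mathbb{C}$ there is $h\in\mathcal{S}$ with $h(y)=c$, whence $\phi_y(U^{-1}h)=c$. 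Hence $\ker\phi_y$ is a maximal ideal whose quotient is $\mathbb{C}$.

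The core of the proof is to classify each such $\phi_y$ as a (possibly conjugated) point evaluation. I would first separate the $\mathbb{C}$-linear and $\mathbb{C}$-antilinear cases: for real $f$, conjugation-equivariance forces $\phi_y(f)\in\mathbb{R}$ and $\phi_y(if)\in i\mathbb{R}$, and writing $\phi_y(if)=i\chi(f)$ and comparing $\phi_y((if)^2)=-\phi_y(f)^2$ with $\phi_y(if)^2=-\chi(f)^2$ yields $\chi(f)^2=\phi_y(f)^2$; a short additivity argument shows the sign $\epsilon(y)\in\{\pm1\}$ is independent of $f$. Thus $\phi_y$ is either $\mathbb{C}$-linear or $\mathbb{C}$-antilinear, reducing matters to a nonzero ring homomorphism $\psi:=\phi_y|_{\mathcal{S}_{\mathbb{R}}}\colon\mathcal{S}_{\mathbb{R}}(\mathbb{R}^n)\to\mathbb{R}$ which one must show is $\mathrm{ev}_{\sigma(y)}$ at a unique point $\sigma(y)\in\mathbb{R}^n$. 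The plan is to prove that the common zero set
\[
Z(\ker\psi):=\bigcap_{f\in\ker\psi}f^{-1}(0)
\]
is a single point; were it empty, for each $x$ a nonnegative $g_x\in\ker\psi$ with $g_x(x)>0$ (obtained by squaring any $f\in\ker\psi$ with $f(x)\neq 0$) could be localized by a partition of unity and rescaled to produce a strictly positive $G\in\mathcal{S}_{\mathbb{R}}$; combined with the surjectivity of $\psi$ (supplying $h$ with $\psi(h)=1$), this would contradict multiplicativity of $\psi$ via a factorization of the form $h=(h/G)\cdot G$. Uniqueness of the point follows from the maximality of the evaluation ideal.

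Having produced $\sigma\colon\mathbb{R}^n\to\mathbb{R}^n$ and $\epsilon\colon\mathbb{R}^n\to\{\pm1\}$ with $U(f)(y)$ equal to $f(\sigma(y))$ or $\overline{f(\sigma(y))}$, I would next verify that $\sigma$ is a homeomorphism (bijectivity is inherited from $U$; continuity follows by testing against functions in $\mathcal{S}$ separating nearby points, using continuity of each $Uf$) and that $\epsilon$ is globally constant (applying $U$ to a fixed non-real test function and invoking connectedness of $\mathbb{R}^n$). After possibly replacing $U$ by $f\mapsto \overline{U(\overline f)}$, we may assume $Uf=f\circ\sigma$. Finally, condition (c) becomes $(f\ast g)\circ\sigma=(f\circ\sigma)\ast(g\circ\sigma)$; taking $g$ to be a mollifier so that $f\ast g$ approximates $f$ forces $\sigma(x-y)=\sigma(x)-\sigma(y)$, whence $\sigma(x)=Bx$ for some $B\in GL(n,\mathbb{R})$, and the Jacobian in the resulting change of variables within the convolution identity then forces $|\det B|=1$.

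The principal obstacle is the classification step for $\phi_y$: without any continuity of $\phi_y$, one must rule out pathological (non-evaluation) ring homomorphisms $\mathcal{S}(\mathbb{R}^n)\to\mathbb{C}$ by purely algebraic means, leveraging surjectivity of $\phi_y$, the abundance of bump functions, and the scaling identity $\psi(cf)=\psi(h_c)\psi(f)$ available whenever a Schwartz function $h_c$ equal to $c$ on the support of $f$ can be chosen.
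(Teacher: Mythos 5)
Your architecture is genuinely different from the paper's: you try to classify, for each fixed $y$, the pointwise character $\phi_y(f)=U(f)(y)$ as a (possibly conjugated) point evaluation, whereas the paper never looks at individual characters. It first proves a support-transfer principle (if $g\equiv 1$ on $\mathrm{Supp}\,f$ then $Ug\equiv 1$ on $\mathrm{Supp}\,Uf$, hence $U$ preserves $\mathcal{C}_c^\infty$), builds a point map $A$ with $A(\mathrm{Supp}\,f)=\mathrm{Supp}\,Uf$ via the finite intersection property inside the \emph{compact} set $\mathrm{Supp}\,Ug$, and only afterwards reads off the evaluation formula $Uf(Ax_0)=m(f(x_0))$ from the resulting zero-set correspondence. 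That ordering is not cosmetic: it is precisely what substitutes for the compactness that your argument lacks.

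The gap in your proposal sits exactly at the step you flag as the principal obstacle, and the sketch you give does not close it. To show $Z(\ker\psi)\neq\emptyset$ you propose to patch the nonnegative functions $g_x\in\ker\psi$ by a partition of unity into a strictly positive $G$; but since $\mathbb{R}^n$ is not compact this $G$ is an \emph{infinite} sum of elements of $\ker\psi$, and with no continuity hypothesis on $\psi$ (it is only $\mathbb{Q}$-linear) there is no reason the limit of partial sums stays in $\ker\psi$ — the whole difficulty of the theorem is that $\ker\psi$ is not known to be closed. The natural repair is to take $h$ compactly supported with $\psi(h)=1$, so that finitely many $g_{x_j}$ already give $G\in\ker\psi$ strictly positive near $\mathrm{Supp}\,h$ and $h/G\in\mathcal{C}_c^\infty$; but this presupposes that $\phi_y$ does not annihilate $\mathcal{C}_c^\infty$, i.e.\ that for every $y$ some $h\in\mathcal{C}_c^\infty$ has $Uh(y)\neq 0$. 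Establishing that is equivalent to showing $U^{-1}(\mathcal{C}_c^\infty)\subseteq\mathcal{C}_c^\infty$, which is the content of the paper's Steps 1--2 (the ``$g\equiv 1$ on $\mathrm{Supp}\,f$'' trick applied to $U^{-1}$) and is absent from your outline. So the missing ingredient is exactly the support-transfer lemma; once you add it, your character-theoretic route can be made to work, and is arguably a clean reorganization. A secondary soft spot: deriving $\sigma(x+y)=\sigma(x)+\sigma(y)$ by letting $g$ run through a mollifier requires passing to a limit inside $(f\ast g)\circ\sigma=(f\circ\sigma)\ast(g\circ\sigma)$, where $g\circ\sigma$ need not concentrate correctly for a mere homeomorphism $\sigma$; the paper's purely support-theoretic argument (its Step 7) avoids this and you should use it instead.
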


\begin{proof} We prove the result in 12 steps.

\noindent For $x_0\in \mathbb{R}^n,$ define $$C(x_0): = \{f \in
\mathcal{S}(\mathbb{R}^n): x_0\in Supp\ f\}.$$\\
\vspace{-0.8cm}

\noindent \textbf{Step 1.} Let $f,g \in \mathcal{S}.$ If $g=1$ on
$Supp \ f,$ then $Ug=1$ on
$Supp \ Uf.$\\

\noindent \textit{Proof of Step 1.} Since $g=1$ on $Supp \ f,$ we
have $f\cdot g =f.$ This gives $Uf= U(f\cdot g) = Uf \cdot Ug,$
and so $Ug=1$ on the set $\{x : Uf(x) \neq 0\}.$\\

\noindent Let $x\in Supp \ Uf$ with $Uf(x)=0.$ Then there is a
sequence $\{x_k\}_{k\in \mathbb{N}} \subseteq ~\mathbb{R}^n$ with
$Uf(x_k) \neq 0$ for all $k$ and $x_k \rightarrow x$ as $k
\rightarrow \infty.$ Since $Uf(x_k) \neq 0,$ we have $Ug(x_k) =1$
for all $k.$ Hence $Ug(x) = \lim \limits_{k \rightarrow \infty} \
Ug(x_k) = 1.$
Thus $Ug=1$ on $Supp \ Uf.$\\

\noindent \textbf{Step 2.}  If $f \in \mathcal{C}_c^\infty,$ then $Uf \in \mathcal{C}_c^\infty.$\\

\noindent \textit{Proof of Step 2.} Choose $f\in
\mathcal{C}_c^\infty$ such that $f(x_0) \neq 0.$ Choose $g\in
\mathcal{S}$ such that $g=1$ on $Supp \ f.$ By Step 1, $Ug=1$ on
$Supp \ Uf.$
Since $Ug\in \mathcal{S},$ we get $Supp \ f$ is compact.\\

%


\noindent \textbf{Step 3.} For any $x_0 \in \mathbb{R}^n,$ there
exists $y_0 \in \mathbb{R}^n$ such
that $Uf\in C(y_0)$ whenever $f \in C(x_0).$\\

\noindent \textit{Proof of Step 3. }Let $E:=\{f\in \mathcal{S}:
f(x_0) \neq 0\}.$ Fix a function $g\in \mathcal{C}_c^\infty$ with
$g(x_0) \neq 0.$ By Step 2, we have $K: = Supp \ Ug$ is compact.\\

\noindent  For $f\in E,$ define $K_f:= K \cap Supp \ Uf.$ For
functions $f_0:=g,f_1,\cdots ,f_k\in E,$ we have $\prod_{j=0}^k \
f_j\not \equiv 0,$ and so $\prod_{j=0}^k \ Uf_j= U(\prod_{j=0}^k \
f_j) \not \equiv 0,$ which gives $\cap _{j=0} ^k \ K_{f_j} \neq
\emptyset.$ This means, the collection $\{K_f: f\in E\}$ of closed
subsets of $K$ has finite intersection property. Since $K$ is
compact, this gives $\bigcap \limits_{f\in E} \ K_f \neq
\emptyset.$ Let $y_0 \in \bigcap
\limits_{f\in E} \ K_f.$\\

\noindent \textbf{Claim.} If $f\in C(x_0),$ then $Uf \in C(y_0).$\\

\noindent \textit{Proof of Claim.} We prove the claim in two
separate
cases.\\

\noindent \textbf{Case 1.} $f(x_0) \neq 0.$\\
 Then
$f$ never vanishes on a neighborhood, say, $V$ of $x_0.$ Let $g\in
\mathcal{S}$ be such that $f\cdot g = 1$ on $V.$ Choose $h\in
\mathcal{S}$ such that $h=1$ on a neighborhood $W$ of $x_0,$ and
satisfies $W\subseteq Supp \ h \subseteq V.$ Since $f\cdot g=1$ on
$Supp \ h,$ by Step 1, we get $U(f\cdot g)=Uf\cdot Ug =1$ on
$Supp\
Uh.$ Since $h\in E,$ by definition, $y_0 \in Supp \ Uh .$ This implies $Uf(y_0) \neq 0,$ and hence $Uf\in C(y_0).$\\

\noindent Note that all our arguments till now can be applied to
the map $U^{-1}$ as well, and so we have proved that $f(x_0)\neq
0$ if and
only if $Uf(y_0) \neq 0.$\\

\noindent A function $f\in \mathcal{S}$ is said to satisfy the
condition $(\star)$ if the following holds:
$$(\star) \ \ \ \ \ \ \ \ \ \ f(x_0)=0 \textrm{ \ if \ and\  only \ if \ } Uf(x_0)=0.\hspace{4cm}$$
By the above discussion, we have that all functions in
$\mathcal{S}$ satisfy condition $(\star).$\\

\noindent \textbf{Case 2.} $f(x_0) =0.$\\

\noindent Suppose $Uf \not \in C(y_0).$ Then there is a
neighbourhood, say $W,$ of $y_0$ such that $Uf$ vanishes
identically on $W.$ Let $h\in \mathcal{S}$ with $Supp \ h
\subseteq W,$ and $h(y_0) \neq 0.$ There exists unique function $g
\in \mathcal{S}$ with $Ug=h.$ Then $U(f\cdot g) = Uf\cdot Ug = Uf
\cdot h \equiv 0.$ This gives $f\cdot g
\equiv 0.$ \\

\noindent On the other hand, since $Ug(y_0) = h(y_0) \neq 0,$ by
Condition $(\star),$ we have $g(x_0) \neq 0,$ and so g is never
zero near $x_0.$ Since $x_0\in Supp \ f,$
 this implies $f\cdot g \not \equiv 0,$ a contradiction. Thus $Uf \in
C(y_0).$\\

%
%

\noindent \textbf{Step 4.} Define a map $A: \mathbb{R}^n
\rightarrow \mathbb{R}^n$ as follows: $Ax=y$ if $Uf\in C(y)$
whenever $f\in C(x).$ Then the map $A$ is well-defined.\\

\noindent \textit{Proof of Step 4.} Suppose for some $x_0 \in
\mathbb{R}^n,$ we have $Ax_0 = y_1$ and $Ax_0 = y_2$ with $y_1\neq
y_2.$ Let $V_1$ and $V_2$ be disjoint neighborhoods of $y_1$ and
$y_2,$ respectively. There exists functions $g_1$ and $g_2$ in
$\mathcal{S}$ which are supported
 in $V_1$ and $V_2,$ respectively, such that $g_1(y_1) \neq 0$ and $g_2(y_2) \neq 0.$ Let $f_1,f_2 \in \mathcal{S}$ with $Uf_1=g_1$ and
 $Uf_2=g_2.$ Then $0\equiv g_1\cdot g_2 =Uf_1 \cdot Uf_2 = U(f_1 \cdot f_2)$ and so $f_1\cdot f_2 \equiv
 0.$\\

 On the other hand, as $g(y_j) = Uf_j(y_j)\neq 0$ for $j=1,2,$ we
 have by
Condition $(\star)$ that $f_j(x_0) \neq
 0$
 for $j=1,2,$ which is in contradiction to $f_1\cdot f_2\equiv 0.$\\

\noindent \textbf{Step 5.} $A:\mathbb{R}^n \rightarrow \mathbb{R}^n$ is a bijection.\\

\noindent \textit{Proof of Step 5.} The hypotheses of the theorem
hold good for the map $U^{-1}$ as well. Applying the preceding
steps to the map $U^{-1}$ gives rise to a well-defined function,
say,
$B:\mathbb{R}^n \rightarrow \mathbb{R}^n.$ Then $B = A^{-1}$, proving that $A$ is bijective.\\

\noindent Our observations can be summarised as:
$$A(Supp \ f) = Supp \ Uf,\textrm{ \ for \ all \ } f\in \mathcal{S}.$$


%
%


\noindent \textbf{Step 6.} The map $A$ is a homeomorphism of $\mathbb{R}^n$ onto itself.\\

\noindent \textit{Proof of Step 6.} Suppose not. Then there exist
$x\in \mathbb{R}^n,$ and sequence $\{x_k\}$ in $\mathbb{R}^n$ with
$x_k
\rightarrow x$ as $k \rightarrow \infty,$ but $Ax_k$ does not converge to $Ax.$\\

\noindent Let $V$ be a neighborhood of $Ax$ such that $Ax_k
\not\in V$ for any $k.$ Let $h\in \mathcal{S}$ with $Supp \
h\subseteq V,$ and $h(Ax) = 1.$ Let $g\in \mathcal{S}$ be such
that $Ug=h.$ Then $Ax_k \not \in Supp \ Ug$ for any $k,$ and so
$x_k \not\in Supp \ g$ for any $k.$ This gives
 $g(x_k) = 0$ for all $k,$ implying $g(x)= 0,$ which is not
possible by Condition $(\star),$ since $Ug(Ax) =1.$\\

We observe that the above argument holds good when the maps $U$
and $A$ are replaced with $U^{-1}$ and $A^{-1}$ respectively,
yielding that
$A: \mathbb{R}^n \rightarrow \mathbb{R}^n$ is a homeomorphism.\\

%
%

\noindent \textbf{Step 7.} The map A satisfies $A(x+y) = Ax+Ay$ for all $x,y \in \mathbb{R}^n.$\\

\noindent \textit{Proof of Step 7.} Suppose $A(x+y) \neq Ax+Ay$ for some $x,y\in \mathbb{R}^n.$\\

\noindent Then there exist disjoint neighborhoods $V_{x+y},V_{xy}$
with $A(x+y) \in ~V_{x+y}$ and $Ax+Ay \in V_{xy}.$ By continuity
of the map $A,$ this gives rise to a neighborhood $W_{x+y,1}$ of
$(x+y)$ with $A(W_{x+y,1}) \subseteq V_{x+y}.$ By continuity of
addition in $\mathbb{R}^n,$ we get neighborhoods $W_{x,1},
W_{y,1}$ of $x$ and $y,$ respectively, such that $W_{x,1}+W_{y,1}
\subseteq W_{x+y,1}.$ Thus
\begin{eqnarray} \vspace{-1cm} A(W_{x,1}+W_{y,1}) &\subseteq&
A(W_{x+y,1}) \subseteq V_{x+y}.\label{eq1}
\end{eqnarray}

On the other hand, by continuity of addition in $\mathbb{R}^n,$
$Ax+Ay \in V_{xy}$ gives neighborhoods $V_{x,2},V_{y,2}$ such that
\begin{eqnarray} Ax\in V_{x,2}, \ Ay\in V_{y,2} \textrm{ \ and \ }
V_{x,2}+V_{y,2}\subseteq V_{xy}.\label{eq2} \end{eqnarray}
 This
implies there exist neighborhoods $W_{x,2}, W_{y,2}$ of $x$ and
$y,$ respectively, with $A(W_{x,2})\subseteq V_{x,2}$ and
$A(W_{y,2}) \subseteq V_{y,2}.$\\

Define $W_x= W_{x,1}\cap W_{x,2}, \ W_y = W_{y,1}\cap W_{y,2}.$
Then
\begin{eqnarray}
  A(W_x)& \subseteq & A(W_{x,2}) \subseteq V_{x,2} =V_x \ (say) \nonumber \\
   A(W_y)& \subseteq & A(W_{y,2}) \subseteq V_{y,2} =V_y \ (say)\nonumber \\
   A(W_x+W_y)& \subseteq & A(W_{x,1}+W_{y,1})  \subseteq A(W_{x+y,1}) \subseteq V_{x+y} \label{eq3}
\end{eqnarray}
Choose $f_x,f_y \in \mathcal{S}$ such that $Supp \ f_x \subseteq
W_x, Supp\ f_y \subseteq W_y$ and $f_x \ast ~f_y \not \equiv~0.$
Let $g_x = Uf_x$ and $g_y = Uf_y.$ Then $U(f_x\ast f_y) = g_x \ast
g_y \not\equiv 0.$\\

\noindent We have
\begin{eqnarray}
 \nonumber Supp(g_x \ast g_y) &=&Supp \ U(f_x \ast f_y)  \subseteq  Supp \ Uf_x +Supp \ Uf_y \\
\nonumber &= & A(Supp \ f_x)+A(Supp \ f_y)  \subseteq AW_x+AW_y
\\ & \subseteq & V_{x,2}+V_{y,2} \subseteq V_{xy} \textrm{   \ \ (by \
\ref{eq2})} \label{eq4}
    \end{eqnarray}
But $Supp \ (f_x  \ast f_y) \subseteq Supp \ f_x + Supp \ f_y
\subseteq W_x+W_y.$ By $(\ref{eq3}),$ this gives
 \begin{eqnarray}
 \nonumber Supp(g_x \ast g_y) &= &Supp (Uf_x \ast Uf_y) = Supp \ U(f_x \ast f_y)\\
  & =& A(Supp (f_x  \ast f_y)) \subseteq  A(W_x+W_y)\subseteq
 V_{x+y}. \label{eq5}
 \end{eqnarray}
From $(\ref{eq4})$ and $(\ref{eq5})$, we get $$Supp(g_x \ast g_y)
 \subseteq V_{xy} \cap V_{x+y} = \emptyset.$$ This gives $g_x\ast
g_y \equiv 0,$ a contradiction.
This proves the additivity of the map $A.$\\

\noindent \textbf{Step 8.} The map $A:\mathbb{R}^n \rightarrow
\mathbb{R}^n$ is a continuous additive bijection, and so also real
linear. Hence it is given by an invertible matrix,
which also we denote by $A.$\\

%
%

\noindent \textbf{Step 9.} 'Extension' of the map $U$ to scalars.\\

\noindent \textit{Illustration of Step 9.} For $f,g \in
\mathcal{S},$ and $c(\neq 0)\in \mathbb{C},$ we have
$$ U(cf)(x)\ Ug(x) = U(cfg)(x) = U(f)(x) \ U(cg)(x) , \ x\in \mathbb{R}^n.$$
Let $h \in \mathcal{S}$ be such that $Uh(x) \neq 0$ for any $x\in
\mathbb{R}^n.$ Then we have
\begin{eqnarray*}
 U(cf)(x)  &=& \frac{U(ch)(x)}{Uh(x)} \ {Uf}(x) \textrm{ \ for \ all \ } f\in \mathcal{S}\\
   &=& m(c,x) \ Uf(x) \textrm{ \ (say)}.
\end{eqnarray*}
Thus $U(cf)(x) = m(c,x)\ Uf(x),$ for all $x\in \mathbb{R}^n.$ By
definition, the function $m(\cdot,\cdot)$ is continuous in the
second variable as a function
of $x\in \mathbb{R}^n.$\\

\noindent \textbf{Claim.} The function $m(\cdot,\cdot)$ is
independent of the second
variable.\\
\noindent \textit{Proof of Claim.} For $f,g \in \mathcal{S}, \ c
\in \mathbb{C},$ and $x\in \mathbb{R}^n,$ we have
\begin{eqnarray*}
 U(cf\ast g) (x) &=& U(f\ast cg) (x)  \\
 (U(cf)\ast Ug)(x) &=& (Uf \ast U(cg))(x) \\
   \int\limits_{\mathbb{R}^n} m(c,x-y)\ Uf(x-y) \ Ug(y) \ dy&=&  \int\limits_{\mathbb{R}^n} Uf(x-y) \ m(c,y) \ Ug(y) \ dy \\
\end{eqnarray*}
As the above equation holds good for all functions $f,g \in
\mathcal{S},$ we have for all $F,G \in \mathcal{S},$
   $$\int\limits_{\mathbb{R}^n} [m(c,x-y)-m(c,y)] \ F(x-y) \ G(y) \ dy =0, \ x\in \mathbb{R}^n.$$
   Fix $x\in \mathbb{R}^n.$ Let $G\in \mathcal{S}$ with $G =1$ on $B(0,r),$ where $B(0,r)$ is
   the open ball in $\mathbb{R}^n,$ centered at the origin and with radius
   $r.$
   Then for all functions $F \in \mathcal{C}_c^\infty$ with $Supp \ F
   \subseteq B(x,r),$ we have
\begin{eqnarray*}
  \int\limits_{B(0,r)} [m(c,x-y)-m(c,y)] \ F(x-y) \ dy &=&0. \\
  \textrm{Thus \hspace{2.5cm} } m(c,x-y) -
  m(c,y) &=& 0 \textrm{\ for all \ } y \in B(0,r).
  \end{eqnarray*}
by the continuity of the map $m(\cdot,\cdot)$ in the second
variable. This gives in particular, $m(c,x) =m(c,0).$ As $x$ was
arbitrary, the above argument gives that the function $m(c,x)$ is
independent of the second variable $x\in \mathbb{R}^n.$ We define
$$m(c): = m(c,0).$$

\noindent \textbf{Step 10.}  The map $m :\mathbb{C} \rightarrow
\mathbb{C}$ is an additive and multiplicative bijection, which
maps $\mathbb{R}$ onto $\mathbb{R},$ and hence we have either
$m(a) = a$ for all $a\in \mathbb{C},$ or $m(a) = \overline{a}$ for
all
$a\in \mathbb{C}.$\\

\noindent \textit{Proof of Step 10.} Let $g\in \mathcal{S}, \
a,b\in \mathbb{C}$ with $g(x) \neq 0$ for any $x\in \mathbb{R}^n.$
Then $Ug(y) \neq 0$
for any $y \in \mathbb{R}^n.$\\

Suppose $m(a) =m(b)$ for some $a,b\in \mathbb{C}.$ Then $$U(ag)
(x) = m(a) \ Ug(x)= m(b) \ Ug(x) =
U(bg)(x) , \ x\in \mathbb{R}^n.$$ Since $U$ is a bijection, this gives $a=b.$\\

\noindent By hypothesis(1), we have
\begin{eqnarray*}
 m(a+\overline{b})\ Ug(x)  &=& U((a+\overline{b})g)(x)
=U(ag+\overline{b}g)(x) \\
   &=& U(ag)(x) + \overline{U(b\overline{g})}(x) = (m(a) +m(\overline{b} )) \ Ug(x).
\end{eqnarray*}
Since Ug is never zero, we get $m(a+\overline{b}) =
m(a)+\overline{m(b)}.$ In particular, $m(\overline{a})
=\overline{m(a)}$ for all $a\in \mathbb{C}.$\\

\noindent Now, hypothesis(2) gives
$$m(ab) Ug(x) = U(ab g)(x) = m(a)U(b g)(x) = m(a) m(b) Ug(x).$$
Again, since $Ug$ is nowhere vanishing, we get $m(ab)=m(a)m(b)$
for all $a,b\in \mathbb{C}.$\\

%


%

\noindent \textbf{Step 11.} For $f\in \mathcal{S},$ and $x_0 \in
\mathbb{R}^n,$ we have
$Uf(Ax_0) = m(f(x_0)).$\\

\noindent \textit{Proof of Step 11.} As before, choose $g \in
\mathcal{S}$ such that $g(x) \neq 0$
for any $x \in \mathbb{R}^n.$ Then $Ug(y) \neq 0$ for any $y \in \mathbb{R}^n.$ \\

\noindent Define $$h(x): = f(x_0) \ g(x) - f(x) \ g(x), \ x\in
\mathbb{R}^n.$$ Then $h\in \mathcal{S}$ and $h(x_0) = 0.$ By
Condition $(\star),$ we have $Uh(Ax_0) =0.$ This gives
\begin{eqnarray*}
0=Uh(Ax_0) &=& U(f(x_0) \cdot g - f \cdot g)(Ax_0)\\
 &=& m(f(x_0)) \
Ug(Ax_0) - Uf(Ax_0) \ Ug(Ax_0). \end{eqnarray*}
 Since $Ug$ is never zero, this gives
$Uf(Ax_0) = m(f(x_0)).$\\

\noindent Since $B=A^{-1},$ using Step 10, we get that either
$Uf(x_0) = f(Bx_0)$ or $Uf(x_0) = \overline{f(Bx_0)}.$\\

\noindent Thus we get that the map $U$ is as claimed by our
theorem. It remains to show that $|det \ B|=1.$\\

%

\noindent \textbf{Step 12.} The matrix $B$ satisfies $|det \ B| =1.$\\

\noindent \textit{Proof of Step 12.} We have
\begin{eqnarray*}
 (f\ast g) (Bx) = U(f\ast g)(x)  &=& (Uf \ast Ug)(x) \\
   &=& \int\limits_{\mathbb{R}^n} Uf(x-y) \ Ug(y) \ dy \\
   &=& \int\limits_{\mathbb{R}^n} f(B(x-y)) \ g(By) \ dy\\
   &=& |det \ B|^{-n} \ \int\limits_{\mathbb{R}^n} f(Bx-y) \ g(y) \ dy
\end{eqnarray*}
Thus $|det \ B|=1,$ proving our result.
\end{proof}

%


\begin{thebibliography}{99}
\bibitem{AAM1} S. Alesker, S. Artstein-Avidan and V. Milman, \textit{A characterization of the Fourier transform and related
topics}, C. R. Math Acad. Sci. Paris \textbf{346} (2008), 625-628.

\bibitem{AAM2} S. Alesker, S. Artstein-Avidan and V. Milman, \textit{A characterization of the Fourier transform and related
topics}, Linear and Complex Analysis: Dedicated to V. P. Havin on
the Occasion of his 75th Birthday, Advances in Mathematical
Sciences, Amer. Math. Soc. Transl.(2) \textbf{226} (2009), 11-26.


\bibitem{E} P. Embrechts, \textit{ On a theorem of E. Lukacs,}, Proc. Amer. Math. Soc. \textbf{68}(1978), 292-294.
\textit{Erratum in} Proc. Amer. Math. Soc. \textbf{75} (1979),
375.

\bibitem{F} C. E. Finol, \textit{Linear transformations intertwining with
group representations}, Notas de Matematica No. \textbf{63},
Universidad de Los Andes, Facultad de Ciencias, Departmento de
Matematica, Merida-Venezuela, 1984.

\bibitem{J} P. Jaming, \textit{ A characterization of Fourier
transforms,}, Colloq. Math. \textbf{118} (2010), 569-580.

\bibitem{K} H. Kober, \textit{On functional
equations and bounded linear transformations}, Proc. London Math.
Soc. (3) \textbf{14} (1964), 495-519.

\bibitem{L} E. Lukacs, \textit{An essential
property of the Fourier transforms of distribution functions},
Proc. Amer. Math. Soc. \textbf{3} (1952), 508-510.

\bibitem{LT} R. Lakshmi Lavanya and S. Thangavelu, \textit{Revisiting the Fourier transform
on the Heisenberg group}, Publ. Mat. \textbf{58} (2014), No. 1,
47-63.

\bibitem{SW} E. M. Stein and G. Weiss, \textit{Introduction to Fourier analysis on Euclidean
spaces.} \textbf{32}, Princeton Mathematical Series, Princeton
University Press, Princeton, N.J., (1971).
\end{thebibliography}
\end{document}